\newtheorem{dfn}{Definition}[section]
\newtheorem{prop}[dfn]{Proposition}
\newtheorem{thm}[dfn]{Theorem}
\newtheorem{rem}[dfn]{Remark}
\newcommand{\R}{\mathbb{R}}
\newcommand{\rsp}{\mathcal{R}}
\newcommand{\rsa}{\mathcal{R}^\ast}
\newcommand{\mat}{\mathcal{M}}
\newcommand{\chix}[1]{\chi \! \genfrac{}{}{0pt}{1}{}{#1}}
\DeclareMathOperator{\sgn}{sgn}
\title{New examples of oriented matroids with disconnected realization spaces}
\author{Yasuyuki Tsukamoto
\thanks{Department of Mathematics, Kyoto University,
Sakyo-ku, Kyoto 606-8502, Japan
tsukam@math.kyoto-u.ac.jp}}
\begin{document}
\maketitle
\begin{abstract}
We construct oriented matroids of rank 3 on 13 points
whose realization spaces are disconnected.
They are defined on smaller points
than the known examples with this property.
Moreover, we construct the one on 13 points
whose realization space is a connected
and non-irreducible semialgebraic variety.
\end{abstract}
\section{Oriented Matroids and Matrices}
Throughout this section, we fix positive integers $r$ and $n$. 

Let $X=(x_1,\dots ,x_n)\in \R^{rn}$
be a real $(r,n)$ matrix of rank $r$,
and $E=\{ 1,\dots ,n\}$ be a set of labels of the columns of $X$.
For such matrix $X$, a map $\chix{X}$ can be defined as
\[ \chix{X}:E^r\rightarrow \{-1,0,+1\},\quad
\chix{X}(i_1, \dots ,i_r):=\sgn \det (x_{i_1},\dots ,x_{i_r}).\]
The map $\chix{X}$ is called the {\it chirotope} of $X$.
The chirotope $\chix{X}$ encodes the information
on the combinatorial type which is called the
{\it oriented matroid} of $X$.
In this case, the oriented matroid
determined by $\chix{X}$ is of rank $r$ on $E$.

We note for some properties
which the chirotope $\chix{X}$ of a matrix $X$ satisfies.
\begin{enumerate}
\item $\chix{X}$ is not identically zero.
\item $\chix{X}$ is alternating, i.e.
$\chix{X}(i_{\sigma(1)},\dots,i_{\sigma(r)})
=\sgn (\sigma) \chix{X}(i_1,\dots,i_r)$\\
for all $i_1,\dots ,i_r\in E$ and all permutation $\sigma$.
\item For all $i_1,\dots ,i_r,j_1,\dots ,j_r\in E$ such that\\
$\chix{X} (j_k,i_2,\dots ,i_r)\cdot
\chix{X} (j_1,\dots ,j_{k-1},i_1,j_{k+1},\dots, j_r)\geq 0$
for $k=1,\dots ,r$,\\
we have $\chix{X}(i_1,\dots,i_r)\cdot
\chix{X}(j_1,\dots,j_r) \geq 0$.
\end{enumerate}
The third property follows from the identity
\begin{multline*}
\det (x_1,\dots, x_r)\cdot \det (y_1,\dots ,y_r)\\
=\sum_{k=1}^r \det (y_k,x_2,\dots ,x_r)\cdot
\det (y_1,\dots ,y_{k-1},x_1,y_{k+1},\dots, y_r),\\
\text{for all } x_1,\dots,x_r,y_1,\dots,y_r\in \R^r.
\end{multline*}

Generally, an oriented matroid of rank $r$ on $E$ ($n$ points)
is defined by a map $\chi:E^r\rightarrow \{-1,0,+1\}$,
which satisfies the above three properties~(\cite{bjo}).
The map $\chi$ is also called the chirotope
of an oriented matroid.
We use the notation $\mat (E,\chi )$ for an oriented matroid
which is on the set $E$ and is defined by the chirotope $\chi$.

An oriented matroid  $\mat (E, \chi )$ is
called \textit{realizable} or \textit{constructible},
if there exists a matrix $X$ such that
$\chi=\chix{X}$.
Not all oriented matroids are realizable,
but we don't consider non-realizable case in this paper.

\begin{dfn}
A realization of an oriented matroid
$\mat=\mat (E, \chi )$ is a matrix
$X$ such that $\chix{X}=\chi$ or $\chix{X}=-\chi$.
\end{dfn}
Two realizations $X,X^\prime$ of $\mat$ is called
linearly equivalent,
if there exists a linear transformation $A\in GL(r,\R)$ such that
$X^\prime =AX$.
Here we have the equation
$\chix{X^\prime}=\sgn (\det A)\cdot \chix{X}$.

\begin{dfn}
The realization space $\rsp(\mat)$ of an oriented matroid $\mat$
is the set of all linearly equivalent classes of realizations of $\mat$,
in the quotient topology induced from $\R^{rn}$.
\end{dfn}

Our motivation is as follows:
In 1956, Ringel asked whether
the realization spaces $\rsp(\mat)$ are 
necessarily connected~\cite{rin}.
It is known that every oriented matroid 
on less than $9$ points has a contractible
realization space.
In 1988, Mn\"{e}v showed that $\rsp(\mat)$ 
can be homotopy equivalent
to an arbitrary semialgebraic variety~\cite{mne}.
His result implies that they can have arbitrary 
complicated topological types.
In particular, there exist oriented matroids with 
disconnected realization spaces.
Suvorov and Righter-Gebert constructed such examples 
of oriented matroids of rank $3$ on $14$ points,
in 1988 and in 1996 respectively~\cite{suv,ric}.
However it is unknown which is the smallest number of points
on which oriented matroids can have
disconnected realization spaces.
See \cite{bjo} for more historical comments.

One of the main results of this paper is the following.
\begin{thm}\label{mt1}
There exist oriented matroids of rank $3$ on $13$ points
whose realization spaces are disconnected.
\end{thm}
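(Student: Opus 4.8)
The plan is to prove the theorem constructively: I would exhibit one explicit chirotope $\chi$ on $E=\{1,\dots,13\}$ of rank $3$ and then show that $\rsp(\mat(E,\chi))$ splits into at least two nonempty open-and-closed pieces. Since any matrix $X$ automatically produces a valid chirotope $\chix{X}$ (the three axioms being consequences of the determinant identity recorded above), the cleanest way to specify $\chi$ is to draw a single \emph{model configuration} of $13$ vectors in $\R^3$, read off the sign of every $3\times 3$ minor, and declare that sign vector to be $\chi$. Working projectively, these $13$ vectors become $13$ points in the projective plane, and the realizations of $\mat(E,\chi)$ are exactly the point configurations having the same collinearities (the zero brackets) and the same orientations (the nonzero brackets) as the model.

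The core of the argument is the choice of incidence structure. I would design the configuration around a \emph{von Staudt--type gadget}: fix four of the points as a projective frame, which simultaneously kills the $GL(3,\R)$ action and installs projective coordinates, and let the position of one distinguished point be governed by a single real parameter $t$. The remaining points are placed so that a prescribed list of collinearities (forced by the vanishing brackets of $\chi$) chains together a sequence of line--line intersections; composing these intersections translates, via the classical addition and multiplication constructions on a line, into a single polynomial relation $f(t)=0$ that every realization must satisfy. The design goal for the $13$-point budget is to make $f$ a quadratic (or otherwise to arrange that its real zero set, intersected with the strict sign conditions coming from the \emph{nonzero} brackets, falls into two disjoint arcs), so that the parameter $t$ is confined to one of two separated regimes.

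With such a gadget in hand, the disconnectedness follows from a \emph{separating-invariant} argument. I would produce a continuous function on $\rsp(\mat(E,\chi))$ --- concretely, the value of $t$, or equivalently the sign of one carefully chosen determinant that does not appear among the fixed brackets of $\chi$ --- and show that the sign conditions defining $\mat(E,\chi)$ force this function to avoid an intermediate value, namely the value at which the discriminant vanishes and an extra collinearity would appear. A continuous path of realizations would carry $t$ continuously between the two regimes and therefore pass through the forbidden value, forcing some bracket to vanish or change sign; but that configuration is not a realization of $\mat(E,\chi)$, since it has a different chirotope. Hence the two regimes lie in distinct connected components. Finally I would confirm that both regimes are actually occupied, by writing down explicit numerical realizations on either side of the forbidden value.

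The main obstacle I expect is twofold. First, one must \emph{certify} the combinatorial data: every collinearity and orientation read off from the model has to be simultaneously realizable and, more importantly, has to pin the construction down tightly enough that no unintended degrees of freedom reconnect the two pieces --- extra freedom in placing the auxiliary points could allow a detour around the forbidden configuration. Second, one must verify the \emph{sign bookkeeping} that makes the intermediate value genuinely forbidden, which amounts to checking that the many nonzero brackets of $\chi$ are consistent with both regimes yet inconsistent with the boundary between them; this is the delicate calculation that the small point count makes tight. Keeping the number of points at $13$ rather than $14$ while still forcing a disconnected solution set is precisely where the construction has to be efficient.
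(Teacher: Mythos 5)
Your outline identifies the right toolbox --- a projective frame, a construction sequence of joins and meets governed by free parameters, and a separating sign invariant --- and this is indeed the skeleton of the paper's argument. But as it stands there is a genuine gap: for an existence theorem of this kind the explicit example \emph{is} the proof, and you never produce one. Everything load-bearing is deferred: the incidence structure is a ``design goal'', the polynomial $f$ is merely hoped to be quadratic, and the decisive verification that both regimes support the \emph{same} chirotope (all $\binom{13}{3}$ brackets must carry equal signs on both sides) is acknowledged as the delicate point but never carried out --- and the tightness of the 13-point budget is exactly where such a design can fail. For comparison, the paper writes down the matrix $X(s,t,u)$ explicitly, observes that every realization of $\mat^\epsilon$ is linearly equivalent to some $X(s,t,u)$ up to positive column scaling, reduces the chirotope conditions to the inequalities $0<s<1$, $0<u<t<1-u$, $(1-t)^2-su>0$, $t^2-(1-s)u>0$ together with $\sgn\bigl(u(1-2s)(1-2t+tu-su)\bigr)=\epsilon$, and then proves disconnectedness of $\rsa(\chi^-)$ via the identity $(2-u)(2t-1)=-2(1-2t+tu-su)+u(1-2s)$, which forces $s$ and $t$ to opposite sides of $1/2$ (Proposition \ref{prom}); the two resulting regions are separated by the excluded hyperplane $s=1/2$, on which the prescribed strictly signed bracket would vanish.

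There is also a substantive difference in mechanism that directly threatens your plan. You propose to \emph{force} the relation $f(t)=0$ through collinearities (zero brackets), i.e.\ to confine the realization space to a hypersurface whose real zero set has two branches. The paper deliberately does the opposite for its disconnected examples: no collinearity is imposed on the critical triple $(9,12,13)$; its bracket $u(1-2s)(1-2t+tu-su)$ is prescribed a \emph{strict} sign $\epsilon=\pm$, so $\rsa(\chi^\pm)$ is an open, full-dimensional region of $(s,t,u)$-space. The equation-based variant you describe is exactly the paper's case $\chi^0$, where $(1-2s)(1-2t+tu-su)=0$ --- and there the realization space turns out to be \emph{connected}, because the two irreducible branches meet along $s=t=\frac{1}{2}$, $0<u<\frac{1}{2}$ (this is the content of Theorem \ref{mt2}). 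So your assumption that the zero set of $f$, intersected with the strict sign conditions, ``falls into two disjoint arcs'' is precisely the point at risk: the two branches may be joined inside the realization space unless the nonzero brackets happen to excise the junction, and nothing in your outline certifies that. To close the gap you must either exhibit the explicit 13-point configuration with the full sign bookkeeping done, or switch, as the paper does, to prescribing a nonzero sign on one non-degenerate bracket, so that separation is enforced by an excluded hyperplane rather than by an unverified discriminant condition.
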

Realization spaces of oriented matroids are semialgebraic varieties.
So it makes sense whether a realization space is irreducible or not.
As far as the author knows, no example of an oriented matroid
with a connected and non-irreducible realization space
has been explicitly given.
\begin{thm}
There exists oriented matroid of rank $3$ on $13$ points
whose realization space is connected and non-irreducible.
\label{mt2}
\end{thm}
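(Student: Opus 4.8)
The plan is to prove Theorem~\ref{mt2} by reusing the explicit machinery behind Theorem~\ref{mt1} and then tuning the combinatorial data so that the defining equation of the realization space becomes reducible while its real locus stays connected. First I would write down an explicit chirotope $\chi$ on $E=\{1,\dots,13\}$, taking as a starting point the rank-$3$ configuration used for Theorem~\ref{mt1} and modifying a small number of prescribed bracket values $\chi(i,j,k)\in\{-,0,+\}$ on the relevant triples, so that one collinearity that is \emph{forbidden} in the disconnected example becomes \emph{permitted}. The intended effect of this single change is to glue together what were two separate chambers, making the realization space connected without disturbing its reducible algebraic nature.

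Next I would normalize. Since realizations are taken up to the $GL(3,\R)$-action and $\chix{AX}=\sgn(\det A)\,\chix{X}$, I can place four points of $E$ in general position at a fixed projective frame; this rigidifies the projective ambiguity and identifies $\rsp(\mat(E,\chi))$ with a semialgebraic subset of a low-dimensional affine space whose coordinates are the free entries of the remaining columns $x_i$. Each prescribed bracket then becomes an explicit sign condition $\sgn\det(x_i,x_j,x_k)=\chi(i,j,k)$, i.e.\ a strict inequality when $\chi(i,j,k)\neq 0$ and an equation when $\chi(i,j,k)=0$. The vanishing brackets let me solve for several coordinates in terms of a few free parameters, and the three-term relation coming from the determinant identity quoted above lets me eliminate the rest, reducing the whole system to a single polynomial equation $f=0$ together with a conjunction of strict inequalities.

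The heart of the argument is then to establish two facts about the set $V=\{f=0\}\cap\{\text{strict inequalities}\}$. For non-irreducibility I would exhibit the factorization $f=f_1 f_2$ forced by the chosen incidences (the modified bracket makes a cross-ratio-type expression split as a product) and check that each factor's zero locus meets $V$ in a positive-dimensional piece, so that $\overline{V}$ genuinely carries two distinct irreducible components, neither contained in the other. For connectedness I would produce an explicit point $p_0\in V$ lying on both $\{f_1=0\}$ and $\{f_2=0\}$ and satisfying \emph{all} of the strict inequalities, together with explicit paths inside each branch joining an arbitrary realization to $p_0$; this exhibits $V$ as path-connected. Combining the two conclusions yields a connected, reducible, hence non-irreducible, realization space.

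The step I expect to be the main obstacle is precisely the reconciliation of these two properties at the gluing point. Writing a reducible $f$ is easy, and making its real locus connected in the abstract is easy, but $V$ is not all of $\{f=0\}$: it is only the intersection with the open region cut out by the nonvanishing brackets. I must guarantee that the common point $p_0$ of the two branches is itself a valid realization, with every forbidden determinant strictly of the correct sign there, and that the strict inequalities neither sever either branch nor isolate the branches from one another. Verifying this requires a careful sign analysis of all mutual determinants of the thirteen points in a neighborhood of $p_0$, and it is this local-to-global bookkeeping, rather than any single computation, where the real work lies.
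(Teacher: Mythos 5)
Your proposal is correct and matches the paper's own argument in all essentials: the paper likewise takes the configuration from Theorem~\ref{mt1}, permits the previously forbidden collinearity by setting the single bracket $(9,12,13)$ to zero, and uses the frame $x_1,\dots,x_4$ to reduce $\rsa(\chi^0)$ to strict inequalities in $(s,t,u)$ together with the reducible equation $(1-2s)(1-2t+tu-su)=0$. The two factors give exactly your $f=f_1f_2$ decomposition into two irreducible components, and your gluing point $p_0$ is realized in the paper as the nonempty intersection $\bigl\{ s=t=\tfrac{1}{2},\ 0<u<\tfrac{1}{2} \bigr\}$, which lies inside the region cut out by the strict inequalities, yielding connectedness and non-irreducibility just as you outline.
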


\vspace{0.5\baselineskip}
\noindent
\textbf{Acknowledgment.} I would like to thank Masahiko Yoshinaga
for valuable discussions and comments.
I also thank Yukiko Konishi for comments on the manuscript.

\section{Construction of the examples}
Throughout this section, we set $E=\{1,\dots, 13\}$.

We will define three chirotopes $\chi^- ,\chi^0$ and $\chi^+$.
We will see that $\mat(E,\chi^-)$ and $\mat(E,\chi^+)$
have disconnected realization spaces,
and $\mat(E,\chi^0)$ has
a connected and non-irreducible realization space.

Let $X(s,t,u)$ be a real $(3,13)$ matrix
with three parameters $s,t,u \in \R$ given by
\begin{multline*}
X(s,t,u):=(x_1,\dots, x_{13})\\
=\left(
\begin{array}{ccccccccccc}
1&0&0&1&s&s&0&1&1&st&s+t-u-st+su\\
0&1&0&1&0&1&t&t&u&t&t-u+su\\
0&0&1&1&1&1&1&1&0&1-su&1-u+su
\end{array}
\right.\\
\left.
\begin{array}{cc}
s+t-st-s^2u&s(t-u+su)\\
t&t-u+su\\
1-su&1-u+su
\end{array}
\right) .
\end{multline*}

This is a consequence of the computation
of the following construction sequence.
Both operations ``$\vee$'' and ``$\wedge$'' can be computed
in terms of  the standard cross product ``$\times$'' in $\R^3$.
The whole construction
depends only on the choice of the three parameters $s,t,u\in\R$.
\begin{align*}
x_1&=\mathstrut^t(1,0,0),\,
x_2=\mathstrut^t(0,1,0),\,
x_3=\mathstrut^t(0,0,1),\,
x_4=\mathstrut^t(1,1,1),\\
x_5&=s\cdot x_1+x_3,\\
x_6&=(x_1\vee x_4)\wedge (x_2 \vee x_5),\\
x_7&=t\cdot x_2+x_3,\\
x_8&=(x_1 \vee x_7)\wedge (x_2\vee x_4),\\
x_9&=u\cdot x_2+x_1,\\
x_{10}&=(x_7\vee x_9) \wedge (x_3\vee x_6),\\
x_{11}&=(x_4\vee x_5) \wedge (x_8\vee x_9),\\
x_{12}&=(x_1 \vee x_{10}) \wedge (x_4\vee x_5),\\
x_{13}&=(x_3\vee x_6)\wedge (x_1 \vee x_{11}).
\end{align*}

We set $X_0=X\left( \frac{1}{2},\frac{1}{2},\frac{1}{3}\right)$.
The chirotope $\chi^\epsilon$ is the alternating map such that
\begin{multline*}
\chi^\epsilon (i,j,k) =\left\{
\begin{array}{lc}
\epsilon & \text{if } (i,j,k)=(9,12,13),\\
\chix{X_0}(i,j,k)& \text{otherwise,}
\end{array}
\right.\\
\text{for all }(i,j,k)\in E^3(i<j<k),
\end{multline*}
where $\epsilon \in \{-,0,+\}$.

The oriented matroid which we will study is
$\mat^\epsilon :=\mat(E,\chi^\epsilon)$.

\begin{rem}
\rm We can replace $X_0$ with
$X\left(\frac{1}{2},\frac{1}{2},u^\prime \right)$
where $u^\prime$ is chosen from
$\R\backslash\{-1,0,\frac{1}{2},1,\frac{3}{2},2,3\}$.
We will study the case $0<u^\prime<\frac{1}{2}$.
If we choose $u^\prime$ otherwise,
we can get other oriented matroids
with disconnected realization spaces.
\end{rem}

In the construction sequence,
we need no assumption on the collinearity of $x_9,x_{12},x_{13}$.
Hence every realization of $\mat^\epsilon$
is linearly equivalent to a matrix $X(s,t,u)$
for certain $s,t,u$,
up to multiplication on each column with positive scalar.

Moreover, we have the rational isomorphism
\[
\rsa (\chi^\epsilon )\times (0,\infty )^{12}
\cong \rsp (\mat^\epsilon ), 
\]
where $\rsa (\chi^\epsilon ):=
\{ X(s,t,u)\in \R^{3\cdot 13}| \,\chix{X(s,t,u)}
=\chi^\epsilon \}$.
Thus we have only to prove that
the set $\rsa(\chi^\epsilon)$
is disconnected (resp. non-irreducible)
to show that the realization space $\rsp(\mat^\epsilon)$
is disconnected
(resp. non-irreducible).

The equation $\chix{X(s,t,u)}=\chi^\epsilon$ means that
\begin{equation}
\sgn \det(x_i,x_j,x_k)=\chi^\epsilon (i,j,k),
\text{ for all }(i,j,k)\in E^3.\label{ch}
\end{equation}
We write some of them which give the equations
on the parameters $s,t,u$.
Note that
for all $(i,j,k)\in E^3(\{i,j,k\}\neq \{9,12,13\})$,
the sign is given by
\[\chi^\epsilon (i,j,k)=
\sgn \det (x_i,x_j,x_k)|_{s=t=1/2,u=1/3}.\]
From the equation
$\sgn \det (x_2,x_3,x_5)=\sgn (s) =\sgn (1/2)=+1,$
we get $s>0$.
Similarly, we get $\det (x_2,x_5,x_4)=1-s>0$,
therefore
\begin{equation}
0<s<1.\label{s}
\end{equation}
From the equations
$\det(x_1,x_7,x_3)=t>0,\det(x_1,x_4,x_7)=1-t>0$,
we get
\begin{equation}
0<t<1.\label{t}
\end{equation}
Moreover, we have the inequalities
\begin{align}
\det (x_1, x_9, x_3)&=u>0,\label{u}\\
\det (x_4, x_7, x_9)&=1-t-u>0,\label{a}\\
\det (x_3,x_9,x_8)&=t-u>0,\label{b}\\
\det (x_5,x_{13},x_7)&=s\bigl( t^2-(1-s)u\bigr) >0,\label{c}\\
\det (x_6, x_{12},x_8)&=(1-s)\bigl( (1-t)^2-su \bigr) >0.\label{d}
\end{align}
From the equation $\det (x_9,x_{12},x_{13})=u(1-2s)(1-2t+tu-su)$,
we get
\begin{equation}
\sgn \bigl(u(1-2s)(1-2t+tu-su)\bigr)=\epsilon. \label{e}
\end{equation}
Conversely,
if we have Eqs. (\ref{s}) - (\ref{e}),
then we get (\ref{ch}).

We can interpret a $(3,13)$ matrix
as the set of vectors $\{x_1,\dots ,x_{13}\}\subset\R^3$.
After we normalize the last coordinate for
$x_i\, (i\in E\backslash \{ 1,2,9\} )$,
we can visualize the matrix
on the affine plane $\{ (x,y,1)\in \R^3\} \cong \R^2$.
Figure \ref{chioq} shows the affine image of $X_0$.
See Figures \ref{chipm}, \ref{chio}
for realizations of $\mat^\epsilon$.
\begin{figure}
\centering
\includegraphics[width=6cm]{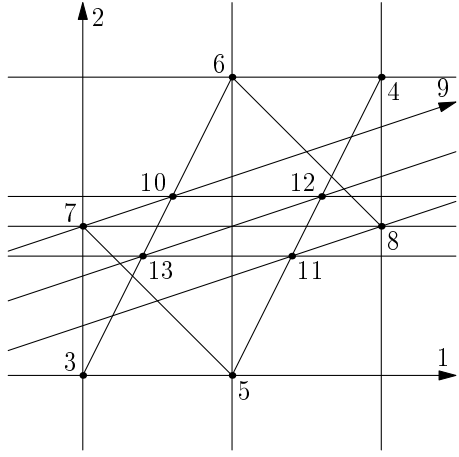}
\caption{Column vectors of $X_0$.}
\label{chioq}
\end{figure}
\begin{figure}
\centering
\includegraphics[width=5cm]{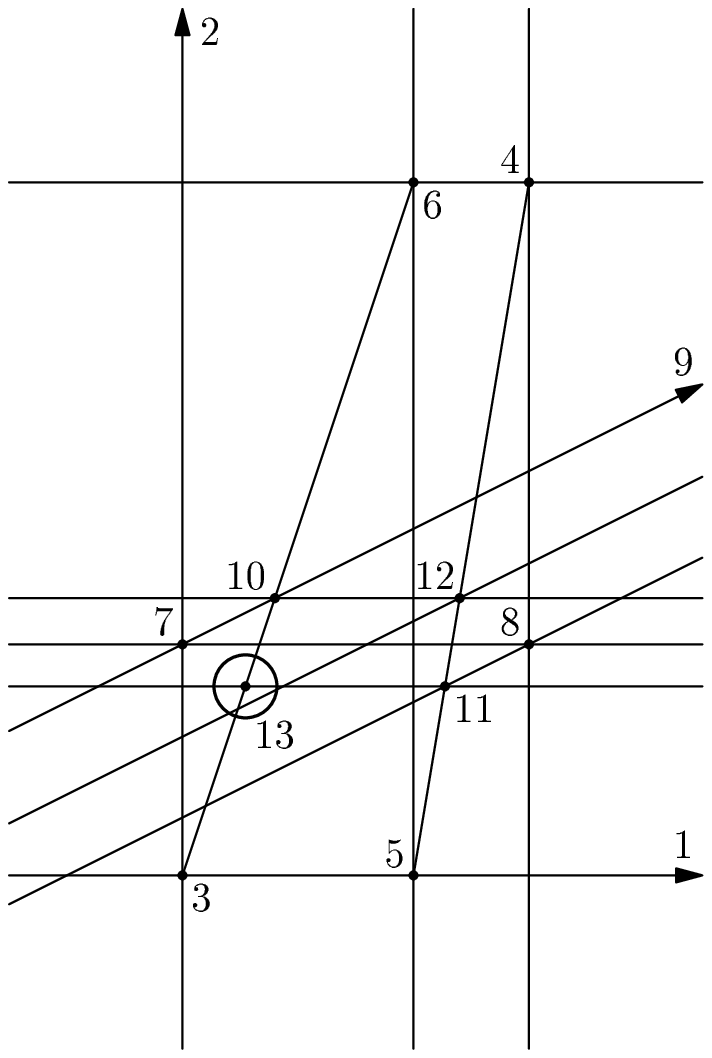}
\qquad
\includegraphics[width=5cm]{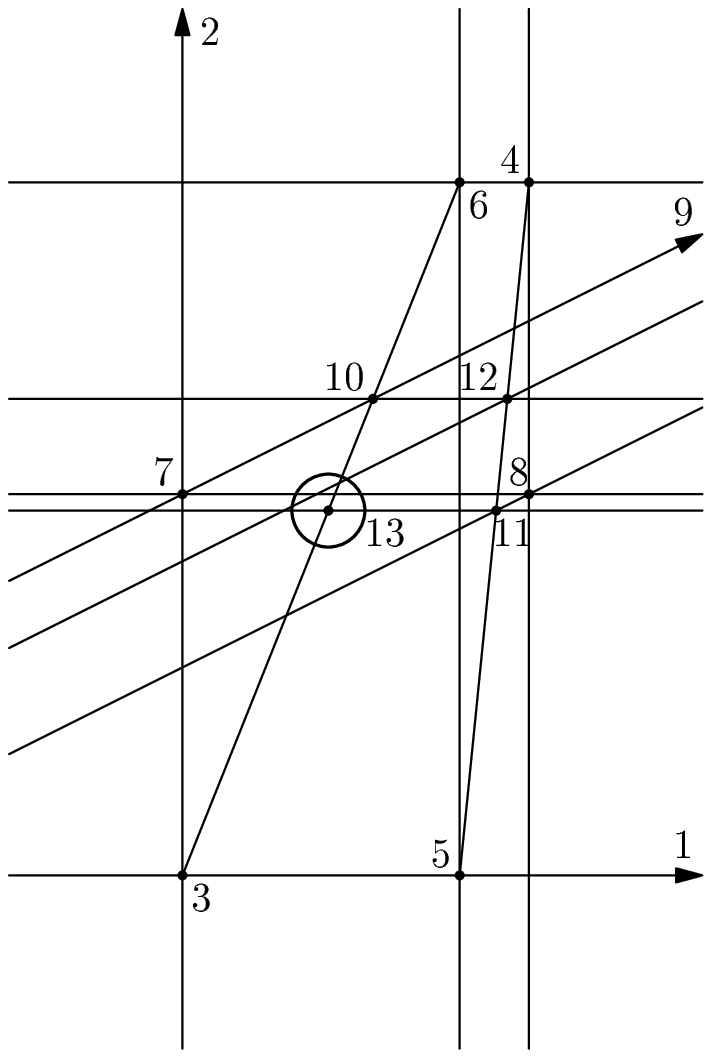}
\caption{Realization of $\mat^-$(on the left)
and that of $\mat^+$(on the right).}
\label{chipm}
\end{figure}
\begin{figure}
\centering
\includegraphics[width=5cm]{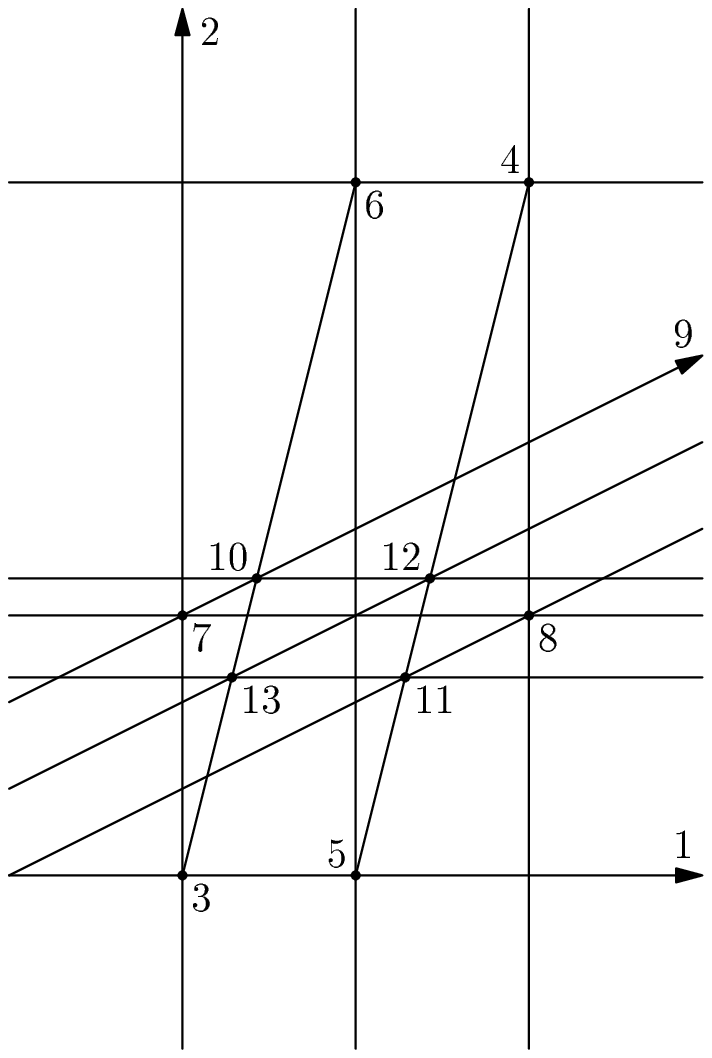}
\qquad
\includegraphics[width=5cm]{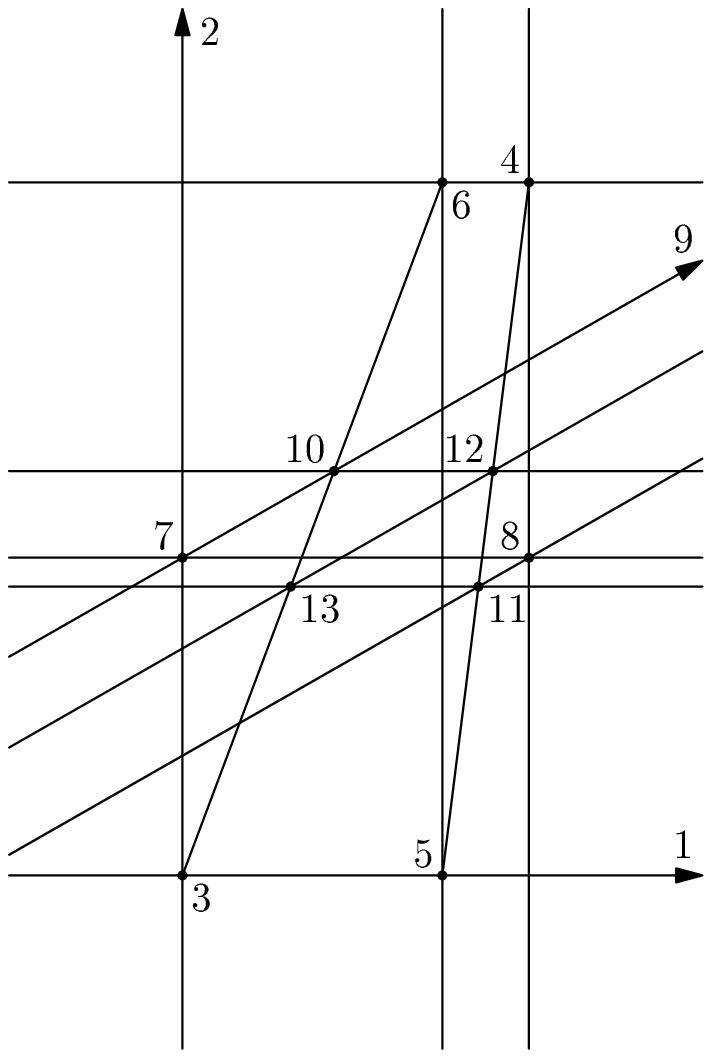}
\caption{Realizations of $\mat^0$.}
\label{chio}
\end{figure}

\vspace{0.7\baselineskip}
\noindent
{\it Proof of Theorem \ref{mt1}}.
We prove that $\rsa(\chi^-)$ and $\rsa(\chi^+)$ are disconnected.
From Eqs. (\ref{s}) - (\ref{e}), we obtain
\[
\rsa (\chi^- )\cong \left\{
(s,t,u)\in \R^3 \,\,
\begin{array}{|c}
0<s<1,\,0<u<t<1-u, \\
(1-t)^2-su>0,\,t^2-(1-s)u>0, \\
(1-2s)(1-2t+tu-su)<0
\end{array}
\right\},
\]
\[
\rsa (\chi^+ )\cong \left\{
(s,t,u)\in \R^3 \,\,
\begin{array}{|c}
0<s<1,\,0<u<t<1-u,\\
(1-t)^2-su>0,\,t^2-(1-s)u>0,\\
(1-2s)(1-2t+tu-su)>0
\end{array}
\right\} .
\]

First, we show that $\rsa (\chi^- )$ is disconnected,
more precisely, consisting of two connected components,
by proving the next proposition.

\begin{prop}
\[
\begin{split}
\rsa (\chi^- )\cong&
\left\{ (s,t,u)\! \in \R^3 \,
\begin{array}{|l}
0\! <\! s\! <\! 1/2\\
1/2\! <\! t\! <\! 1
\end{array}
,0\! <\! u\! <\! \min \left\{1\! -\! t,
\frac{(1\!-\!t)^2}{s},\,\frac{2t\!-\!1}{t\!-\!s} \right\} 
\right\} \\
&\cup \left\{ (s,t,u)\! \in \R^3 \,
\begin{array}{|l}
1/2<s<1\\
0<t<1/2
\end{array}
,\,0<u<\min \left\{ t,\, \frac{t^2}{1-s},\,
\frac{1-2t}{s-t}\right\}
\right\} .
\end{split}
\]
\label{prom}
\end{prop}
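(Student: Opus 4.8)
The plan is to start from the semialgebraic description of $\rsa(\chi^-)$ recorded just above the statement and to split it according to the sign of the first factor of the defining inequality $(1-2s)(1-2t+tu-su)<0$. Because this product is strictly negative, neither factor vanishes; in particular $s\neq 1/2$, so $\rsa(\chi^-)$ is contained in the union of the two disjoint open slabs $\{s<1/2\}$ and $\{s>1/2\}$. I will show that intersecting $\rsa(\chi^-)$ with each slab produces exactly one of the two sets in the statement, whence those sets are separated, and that each of them is connected.

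Consider first the slab $s<1/2$, where $1-2s>0$ forces the second factor to be negative, i.e. $u(t-s)<2t-1$. I would first rule out the range $t\le s$ (which forces $t<1/2$): there $t-s\le 0$, so the factor inequality needs $u(s-t)>1-2t>0$, hence $u>\tfrac{1-2t}{s-t}$, and combining this with the bound $u<t$ coming from (\ref{b}) requires $\tfrac{1-2t}{s-t}<t$, that is $(1-t)^2<st$; but $s,t<1/2$ give $st<1/4<(1-t)^2$, a contradiction (the degenerate case $t=s$ fails directly, since then the factor equals $1-2t>0$). Thus $t>s$, so $t-s>0$ and the factor inequality reads $u<\tfrac{2t-1}{t-s}$; for $u>0$ to be admissible this needs $t>1/2$. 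With $t>1/2$ the bound $u<t$ is subsumed by $u<1-t$ from (\ref{a}), while the bound $u<\tfrac{(1-t)^2}{s}$ from (\ref{d}) is kept. It then remains to check that the bound $u<\tfrac{t^2}{1-s}$ coming from (\ref{c}) is redundant on the rectangle $0<s<1/2<t<1$, i.e. that $\tfrac{t^2}{1-s}\ge\min\bigl\{1-t,\tfrac{(1-t)^2}{s},\tfrac{2t-1}{t-s}\bigr\}$ there. Assembling these facts yields precisely the first set of the proposition.

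The slab $s>1/2$ is treated by the completely analogous argument, with the roles of the two factors exchanged: now $1-2s<0$ forces $u(t-s)>2t-1$, the range $t\ge s$ is excluded in the same way, so $t<1/2$ is forced, the bound $u<1-t$ is subsumed by $u<t$, the bound from (\ref{c}) is retained as $u<\tfrac{t^2}{1-s}$, the factor condition becomes $u<\tfrac{1-2t}{s-t}$, and the bound $u<\tfrac{(1-t)^2}{s}$ from (\ref{d}) is the redundant one; this gives the second set. Finally each set is connected: the base parameter $(s,t)$ ranges over an open rectangle, which is convex hence connected, and over it the admissible $u$ fill the interval $(0,M(s,t))$ for the strictly positive continuous function $M$ given by the displayed minimum, so the set is the open region under the graph of $M$ and is path-connected. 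As the two sets lie in the disjoint slabs $\{s<1/2\}$ and $\{s>1/2\}$, they are exactly the two connected components of $\rsa(\chi^-)$.

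I expect the redundancy of the bound $u<\tfrac{t^2}{1-s}$ (and its mirror on the second slab) to be the main obstacle, since the minimizing term among $1-t$, $\tfrac{(1-t)^2}{s}$ and $\tfrac{2t-1}{t-s}$ changes across the rectangle: for $t$ near $1/2$ the third term is smallest, while for $t$ near $1$ the first dominates. The verification therefore requires a short case split in which the comparison reduces to elementary polynomial inequalities such as $(1-t)(1-s)\le t^2$ and, after clearing denominators, a sign analysis resting on the factorization $t^3-2t+1=(t-1)(t^2+t-1)$.
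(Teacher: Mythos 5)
Your skeleton coincides with the paper's: split $\rsa(\chi^-)$ by the sign of $1-2s$, show that the slab $s<1/2$ forces $t>1/2$ (and symmetrically $t<1/2$ for $s>1/2$), and observe that on each slab one of the two quadratic constraints becomes redundant while the other is kept. Your derivation of $t>1/2$ is complete and slightly different in mechanism: you rule out $t\le s$ by playing $u>\frac{1-2t}{s-t}$ against $u<t$ to reach $(1-t)^2<st$, and $s<t\le 1/2$ by positivity of $u$; the paper instead reads $2t-1>0$ off in one stroke from the identity (\ref{thalf}), $(2-u)(2t-1)=-2(1-2t+tu-su)+u(1-2s)$. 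Your connectedness remark (region under the graph of a positive continuous function over a convex base) is fine, and the subsumption of $u<t$ by $u<1-t$ when $t>1/2$ is correct.

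The genuine gap is the step you yourself flag as the main obstacle: the redundancy of $u<\frac{t^2}{1-s}$ on the first slab (and of $u<\frac{(1-t)^2}{s}$ on the second) is asserted but never proved --- you only announce ``a short case split'' and a suggested factorization. This is precisely where the paper's proof does its real work, and it does so without any comparison of the three terms in the minimum: by the identity (\ref{tsq}), $t^2-(1-s)u=-(1-2t+tu-su)+(1-t)(1-t-u)$, so the two retained constraints $1-2t+tu-su<0$ and $u<1-t$ force $t^2-(1-s)u>0$ pointwise in $(s,t,u)$; symmetrically, (\ref{1mtsq}) disposes of $(1-t)^2-su>0$ on the other slab. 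Your global reformulation $\frac{t^2}{1-s}\ge\min\bigl\{1-t,\frac{(1-t)^2}{s},\frac{2t-1}{t-s}\bigr\}$ is equivalent and true, but it genuinely needs the case analysis you defer, because the minimizer changes across the rectangle: e.g.\ at $s=0.1$, $t=0.55$ one has $t^2<(1-t)(1-s)$, so the comparison with $1-t$ fails there and only the term $\frac{2t-1}{t-s}$ saves the inequality. As written the proof is therefore incomplete at its acknowledged crux; the quickest repair inside your own framework is to note that $0<u<\min\bigl\{1-t,\frac{2t-1}{t-s}\bigr\}$ already gives $1-2t+tu-su<0$ and $1-t-u>0$, whence the identity (\ref{tsq}) yields $t^2-(1-s)u>0$ at once, making the min-comparison unnecessary.
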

\begin{proof}
There are two cases
\begin{equation*}
(1-2s)(1-2t+tu-su)<0
\Leftrightarrow
\left\{
\begin{array}{c}
1-2s>0,\,1-2t+tu-su<0,\\
\text{or} \\
1-2s<0,\, 1-2t+tu-su>0.
\end{array}
\right. 
\end{equation*}
Note that
\begin{align}
&(2-u)(2t-1)=-2(1-2t+tu-su)+u(1-2s),\label{thalf}\\
&t^2-(1-s)u=-(1-2t+tu-su)+(1-t)(1-t-u),\label{tsq}\\
&(1-t)^2-su=(1-2t+tu-su)+t(t-u).\label{1mtsq}
\end{align}
($\subset$)
For the case $1-2s>0$ and $1-2t+tu-su<0$,
the inequality $2t-1>0$ follows from Eq. (\ref{thalf}).
Since we have $0<s<1/2<t<1$, we get
\begin{equation}
\left\{
\begin{array}{l}
1-2t+tu-su<0,\\(1-t)^2-su>0,\\1-t-u>0
\end{array}\right.
\Leftrightarrow u<\min\left\{1-t,\,
\frac{(1-t)^2}{s},\, \frac{2t-1}{t-s}\right\}.
\label{ssmall}
\end{equation}

For the other case $1-2s<0$, similarly,
we get $1-2t>0$ from Eq. (\ref{thalf}).
Since we have $0<t<1/2<s<1$, we get
\begin{equation}
\left\{
\begin{array}{l}
1-2t+tu-su>0,\\ t^2-(1-s)u>0,\\t-u>0
\end{array}\right.
\Leftrightarrow u<\min
\left\{ t,\,\frac{t^2}{1-s},\,\frac{1-2t}{s-t} \right\}.
\label{tsmall}
\end{equation}
($\supset$)
For the component $0<s<1/2<t<1$,
the inequalities $1-2t+tu-su<0,\,(1-t)^2-su>0,\,1-t-u>0$
follow from (\ref{ssmall}).
Thus we get $t^2-(1-s)u>0$ from Eq. (\ref{tsq}).
The inequality $u<t$ holds because $t>1/2$ and $u<1-t$.

For the other component $0<t<1/2<s<1$, similarly,
we get the inequalities
$1-2t+tu-su>0,\, t^2-(1-s)u>0,\,t-u>0$ from (\ref{tsmall}),
and $(1-t)^2-su>0$ from Eq. (\ref{1mtsq}).
Last, we get $u<1-t$ from $t<1/2$ and $u<t$.
\end{proof}

For the set $\rsa (\chi^+ )$,
we have the following proposition.
\begin{prop}
\begin{align*}
\rsa(\chi^+)\cong&
\left\{ (s,t,u)\in \R^3 \left|
\begin{array}{c}
0<s<1/2,0<u<1/2,\\
(1-u)^2-(1-s)u>0,
\end{array}
\!\sqrt{(1\!-\!s)u}<t<\frac{1\!-su}{2-u}
\right. \right\} \notag \\
\cup&
\left\{ (s,t,u)\in \R^3 \left|
\begin{array}{c}
1/2<s<1,0<u<1/2,\\
(1-u)^2-su>0,\end{array}
\frac{1-su}{2-u}<t<1-\sqrt{su}
\right. \right\} .\label{chip}
\end{align*}

\end{prop}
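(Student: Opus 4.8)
The plan is to follow the same scheme as the proof of Proposition~\ref{prom}. The defining inequality $(1-2s)(1-2t+tu-su)>0$ from the description of $\rsa(\chi^+)$ obtained from Eqs.~\eqref{s}--\eqref{e} splits the set into two pieces according to the sign of $1-2s$: either $0<s<1/2$ with $1-2t+tu-su>0$, or $1/2<s<1$ with $1-2t+tu-su<0$. These will produce the first and second sets of the claimed union respectively, and I would prove each by a separate pair of inclusions.

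In the first case I would extract the two sharp bounds on $t$ immediately: $t^2-(1-s)u>0$ together with $t>0$ gives $t>\sqrt{(1-s)u}$, and $1-2t+tu-su>0$ gives $t<\frac{1-su}{2-u}$ since $2-u>0$. The point is that the remaining original constraints $0<u<t<1-u$ and $(1-t)^2-su>0$ are captured by the single extra inequality $(1-u)^2-(1-s)u>0$ (plus $u<1/2$). This rests on the polynomial identity $(1-u)^2-(1-s)u=(1-u)(2-u)-(1-su)$, which makes $(1-u)^2-(1-s)u>0$ equivalent to $\frac{1-su}{2-u}<1-u$. Hence in the ``$\supset$'' direction one gets $t<\frac{1-su}{2-u}<1-u$ for free, while $t>u$ follows from $\sqrt{(1-s)u}>u$ (valid since $1-s>1/2>u$), and $(1-t)^2-su>0$ follows from Eq.~\eqref{1mtsq}, whose right-hand side $(1-2t+tu-su)+t(t-u)$ is then a sum of positive terms. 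In the ``$\subset$'' direction, $u<1/2$ comes from $u<t<1-u$, and $(1-u)^2-(1-s)u>0$ from $\sqrt{(1-s)u}<t<1-u$.

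The second case is entirely parallel. From $1-2t+tu-su<0$ I get $t>\frac{1-su}{2-u}$, and from $(1-t)^2-su>0$ with $1-t>0$ I get $t<1-\sqrt{su}$. Here the relevant identity is $(1-u)^2-su=(1-su)-u(2-u)$, so that $(1-u)^2-su>0$ is equivalent to $\frac{1-su}{2-u}>u$; this yields $t>u$, while $s>1/2>u$ gives $\sqrt{su}>u$ and hence $t<1-\sqrt{su}<1-u$. The one surviving inequality $t^2-(1-s)u>0$ is recovered from Eq.~\eqref{tsq}, whose right-hand side $-(1-2t+tu-su)+(1-t)(1-t-u)$ is again a sum of positive terms once $t<1-u$ is known.

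I expect the only real obstacle to be bookkeeping rather than any hard estimate: after isolating the two sharp bounds on $t$, one must confirm that each of the other inequalities defining $\rsa(\chi^+)$ is either redundant or exactly equivalent to one of the quadratic-in-$u$ conditions $(1-u)^2-(1-s)u>0$ and $(1-u)^2-su>0$. The two displayed polynomial identities, which identify these quadratics with the comparisons between $\frac{1-su}{2-u}$ and the endpoints $u$ and $1-u$, are precisely what make this redundancy transparent, and checking them together with the sign computations in Eqs.~\eqref{tsq} and \eqref{1mtsq} is the whole of the work.
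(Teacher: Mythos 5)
Your proposal is correct and takes exactly the route the paper intends: the paper omits this proof as being ``similar to that of Proposition \ref{prom}'', and your case split on the sign of $1-2s$, the reuse of the identities \eqref{tsq} and \eqref{1mtsq} to recover the redundant quadratic constraints, and the two new identities $(1-u)^2-(1-s)u=(1-u)(2-u)-(1-su)$ and $(1-u)^2-su=(1-su)-u(2-u)$ (both of which check out) are precisely that adaptation, with the latter playing the role that Eq.~\eqref{thalf} played before. All the individual inferences verify, so this is a faithful reconstruction of the omitted argument.
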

The proof is similar to that of Proposition \ref{prom}
and omitted.

\vspace{0.7\baselineskip}
\noindent 
{\it Proof of Theorem \ref{mt2}.}
We show that
$\rsa (\chi^0)$ consists of two irreducible components
whose intersection is not empty.
From Eqs. (\ref{s}) - (\ref{e}), we get
\[
\rsa (\chi^0 )
\cong \left\{ (s,t,u)\in \R^3\,
\begin{array}{|c}
0<s<1,\,0<u<t<1-u, \\
(1-t)^2-su>0,\,t^2-(1-s)u>0,\\
(1-2s)(1-2t+tu-su)=0
\end{array}
\right\}.
\]
Here we have the decomposition
\[
\begin{split}
\rsa (\chi^0)\cong &\Bigl\{ (s,t,u)\in \R^3 \Bigm|
0<t<1,\,0<u<2t^2,\,u<2(1-t)^2,\,1-2s=0
\Bigr\} \\
&\cup \left\{ (s,t,u)\in \R^3\,
\begin{array}{|c}
0<s<1,\,0<u<1/2,\,(1-u)^2-su>0,\\
(1-u)^2-(1-s)u>0,\,1-2t+tu-su=0
\end{array}
\right\} .
\end{split}
\]
The intersection of the two irreducible components is the set
\[ \Bigl\{ X \left( \tfrac{1}{2},\tfrac{1}{2},u\right)
\Bigm| 0<u<\tfrac{1}{2} \Bigr\} \cong
\Bigl\{ (s,t,u)\in \R^3 \Bigm| s=t=\tfrac{1}{2},
\, 0<u<\tfrac{1}{2} \Bigr\}. \]
The proof is also similar to that of Proposition \ref{prom}
and omitted.

\vspace{0.5\baselineskip}
Figure \ref{chio} shows two realizations of $\mat^0$.
On the left, it shows the affine image
of $X\bigl( \frac{1}{2},\frac{3}{8},\frac{1}{4} \bigr)$,
on the irreducible component $1-2s=0$.
On the right, the image of
$X\bigl( \frac{3}{4},\frac{11}{24},\frac{2}{7}\bigr)$,
so it is on the other component $1-2t+tu-su=0$.
They can be deformed continuously to each other via 
$X\left( \frac{1}{2},\frac{1}{2},u\right) \,(0<u<\frac{1}{2})$.

\end{document}